\documentclass[11pt]{article}

\usepackage[OT1]{fontenc}
\usepackage[utf8]{inputenc}
\usepackage{amsthm}

\makeatletter
\theoremstyle{plain}
\newtheorem{thm}{\protect\theoremname}
\theoremstyle{plain}
\newtheorem{cor}[thm]{\protect\corollaryname}
\theoremstyle{plain}
\newtheorem{lem}[thm]{\protect\lemmaname}
\ifx\proof\undefined
\newenvironment{proof}[1][\protect\proofname]{\par
	\normalfont\topsep6\p@\@plus6\p@\relax
	\trivlist
	\itemindent\parindent
	\item[\hskip\labelsep\scshape #1]\ignorespaces
}{%
	\endtrivlist\@endpefalse
}
\providecommand{\proofname}{Proof}
\fi
\newcommand{\lyxaddress}[1]{
	\par {\raggedright #1
	\vspace{1.4em}
	\noindent\par}
}

\@ifundefined{date}{}{\date{}}
\makeatother

\providecommand{\corollaryname}{Corollary}
\providecommand{\lemmaname}{Lemma}
\providecommand{\theoremname}{Theorem}

\begin{document}
\title{Decay estimates for Yang-Mills connections and Yang-Mills-Higgs pairs
in higher dimensions}
\author{Matheus Vieira}
\maketitle
\begin{abstract}
We obtain decay estimates at infinity for Yang-Mills connections and
Yang-Mills-Higgs pairs on higher-dimensional manifolds. The main tool
is an abstract decay theorem for manifolds with suitable positive
harmonic functions. Combining this theorem with refined Kato inequalities,
we prove that a natural quadratic decay assumption implies an improved
decay rate on certain asymptotically conical and product manifolds.
\end{abstract}

\section{Introduction}

Understanding the behavior of gauge fields at infinity is an important
problem in geometry. In many geometric problems a natural quadratic
decay assumption implies an improved decay rate. In this work we prove
results of this type for Yang-Mills connections and Yang-Mills-Higgs
pairs by using an abstract decay theorem together with refined Kato
inequalities.

Several previous works in gauge theory have obtained decay estimates
at infinity via refined Kato inequalities. In \cite{R1993} Råde obtained
 decay estimates for Yang-Mills connections on the Euclidean space
$R^{4}$. In \cite{GP1997} Groisser and Parker extended these results
to asymptotically flat four-manifolds. In \cite{F2001} Feehan obtained
 decay estimates for eigenspinors on such manifolds. In \cite{SU2019}
Smith and Uhlenbeck obtained refined Kato inequalities in higher dimensions.
Adapting their argument, Fadel \cite{F2023} obtained decay estimates
for Yang-Mills-Higgs pairs on certain asymptotically conical three-manifolds,
while Chen and Zhu \cite{CZ2024} used these inequalities to obtain
decay estimates for Yang-Mills connections on asymptotically locally
Euclidean manifolds in higher dimensions. Refined Kato inequalities
have also been used to obtain gap theorems in Yang-Mills theory (\cite{GKS2018},
\cite{V2019}, \cite{V2024}) and to study singularities of Yang-Mills-Higgs
pairs (\cite{C2026}, \cite{CS2026}). For more details on refined
Kato inequalities see \cite{B2000}, \cite{CGH2000}, \cite{CV2026}.

In this work we obtain decay estimates at infinity for Yang-Mills
connections and Yang-Mills-Higgs pairs on certain asymptotically conical
manifolds (Corollary \ref{cor:ac}) and certain product manifolds
(Corollary \ref{cor:product}). We prove these results by using an
abstract decay theorem (Theorem \ref{thm:abstractdecay}) and a refined
Bochner inequality (Theorem \ref{thm:bochnerinequality}) based on
the refined Kato inequalities in \cite{SU2019} and \cite{CV2026}.

We obtain an abstract decay theorem inspired by Groisser and Parker
\cite{GP1997}. Their proof is carried out for Yang-Mills connections
on four-manifolds. Our result extends this method to general nonlinear
differential inequalities on manifolds with suitable positive harmonic
functions.
\begin{thm}
\label{thm:abstractdecay}Fix constants $p>0$, $\nu>0$, $\tau>0$,
$C_{1}>0$ and $C_{2}\geq0$ with $p\nu>2$. Consider a complete Riemannian
manifold that admits a positive harmonic function $h$ defined outside
a compact set such that $\rho=h^{-\frac{1}{\nu}}$ is proper and $|\nabla\rho|\geq c_{0}>0$.
Then there are constants $R_{0}>0$, $\epsilon_{0}>0$ and $C>0$
satisfying the following. For each $R>R_{0}$ and $0<\epsilon<\epsilon_{0}$,
if a nonnegative function $u$ on the manifold satisfies
\[
\Delta u\geq-C_{1}u^{p+1}-C_{2}\rho^{-2-\tau}u\,\,\,\,\,and\,\,\,\,\,u\leq\epsilon\rho^{-\frac{2}{p}}\,\,\,\,\,in\,\,\,\,\,\{\rho\geq R\},
\]
then
\[
u\leq C\epsilon R^{\nu-\frac{2}{p}}\rho^{-\nu}\,\,\,\,\,in\,\,\,\,\,\{\rho\geq R\}.
\]
\end{thm}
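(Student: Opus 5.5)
The plan is a maximum principle argument on $\{\rho\geq R\}$, comparing $u$ with a supersolution built from powers of $h=\rho^{-\nu}$. The first step is to linearize the inequality using the hypothesis. Since $u\leq\epsilon\rho^{-2/p}$, we have $u^{p}\leq\epsilon^{p}\rho^{-2}$. Therefore
\[
\Delta u+V u\geq0,\qquad V:=C_{1}\epsilon^{p}\rho^{-2}+C_{2}\rho^{-2-\tau},
\]
on $\{\rho\geq R\}$. The potential $V$ is a small multiple of $\rho^{-2}$, either because $\epsilon$ is small or because $R$ is large, using $\rho^{-\tau}\leq R^{-\tau}$. So it suffices to produce a positive supersolution $w$ of $\Delta w+Vw\leq0$ that decays like $\rho^{-\nu}$, and then compare $u$ with $w$.

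The second step computes with powers of $h$. Since $h$ is harmonic,
\[
\Delta h^{\alpha}=\alpha(\alpha-1)h^{\alpha-2}|\nabla h|^{2}.
\]
Also $h=\rho^{-\nu}$ gives $|\nabla h|=\nu\rho^{-\nu-1}|\nabla\rho|$, hence
\[
\Delta h^{\alpha}=\alpha(\alpha-1)\nu^{2}|\nabla\rho|^{2}\rho^{-2}h^{\alpha}.
\]
For $0<\alpha<1$ this is at most $-\alpha(1-\alpha)\nu^{2}c_{0}^{2}\rho^{-2}h^{\alpha}$, a definite negative multiple of $\rho^{-2}h^{\alpha}$, and this is exactly where $|\nabla\rho|\geq c_{0}$ is used. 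I will take $\alpha=\frac{2}{p\nu}\in(0,1)$, which is allowed because $p\nu>2$, so that $h^{\alpha}=\rho^{-2/p}$. Set
\[
w=A\,h+B\,h^{\alpha}=A\rho^{-\nu}+B\rho^{-2/p}.
\]
Choosing $\epsilon_{0}$ and $R_{0}$ so that $C_{1}\epsilon_{0}^{p}+C_{2}R_{0}^{-\tau}<\alpha(1-\alpha)\nu^{2}c_{0}^{2}$, the term $Bh^{\alpha}$ is a strict supersolution. The harmonic term $Ah$ is not, since $\Delta(Ah)+VAh=VAh>0$. Its error $\lesssim A\rho^{-2-\nu}$ must be absorbed by the gain $\gtrsim B\rho^{-2-2/p}$, and that requires $B\gtrsim A\rho^{2/p-\nu}$. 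This fails at infinity, because $2/p-\nu<0$ would require $B$ to grow relative to $A$ inward. So I expect the direct two-term barrier to be the main obstacle, and I would handle it in one of two ways.

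The first option is to use $h^{\beta}$ with $\beta<1$ close to $1$ in place of $h$. This gives $\rho^{-\beta\nu}$ decay for every $\beta<1$, and the exponent is then improved to exactly $\nu$ in a separate step. The second option, closer to Groisser–Parker, is to use the conformal change or the substitution $u=h\,v$. Then
\[
\operatorname{div}(h^{2}\nabla v)\geq-Vh^{2}v,
\]
and I would run an iteration in which each step improves the decay exponent of $u$ by a fixed amount. Concretely: if $u\leq K\rho^{-\gamma}$, then $u^{p}\leq K^{p}\rho^{-p\gamma}$ with $p\gamma>2$. The potential $V$ then decays faster than $\rho^{-2}$, namely as $\rho^{-2-\delta}$ with $\delta=\min(p\gamma-2,\tau)>0$. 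For such an integrable-rate potential, the barrier $Ah(1-\rho^{-\delta'})$ with a small $\delta'>0$ becomes a genuine supersolution for large $\rho$. This is because $\Delta(h\rho^{-\delta'})$ gives a positive contribution of size $\rho^{-2-\delta'}h$ that dominates $V h$.

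The third step is the comparison. On $\{\rho=R\}$ the hypothesis gives $u\leq\epsilon R^{-2/p}$, so I choose the constant $A\sim\epsilon R^{\nu-2/p}$ so that $w\geq u$ there. At infinity $u\to0$ by the hypothesis, and $w>0$. Since $w>0$ is a supersolution of $\Delta+V$, the quotient $u/w$ satisfies an elliptic inequality without zeroth-order term and obeys the maximum principle on the exhaustion $\{R\leq\rho\leq S\}$, $S\to\infty$; here properness of $\rho$ is what makes these sets compact. This yields $u\leq w\leq C\epsilon R^{\nu-2/p}\rho^{-\nu}$. The constants must be tracked so that $C$ is independent of $R$ and $\epsilon$. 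The main difficulty is closing the last step of the iteration, from decay $\rho^{-\beta\nu}$ to the sharp $\rho^{-\nu}$, with the factor $R^{\nu-2/p}$ uniform. I would do this with the final barrier $Ah(1-\rho^{-\delta'})$, normalized as $A\,h\,(1-(R/\rho)^{\delta'})$ plus a small boundary correction, and verify its supersolution property using the already-obtained intermediate decay.
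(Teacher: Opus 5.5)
There is a genuine gap in your comparison step. Your overall architecture matches the paper's two steps: linearize with the a priori bound, first get $h^{\beta}$ decay for some $\beta<1$, then upgrade to $h$. But the real difficulty is the first improvement, not the last one.

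\textbf{Where the exhaustion fails.} You argue that since $u\to0$ at infinity and $w>0$ is a supersolution of $\Delta+V$, the quotient $u/w$ obeys the maximum principle on $\{R\le\rho\le S\}$, and letting $S\to\infty$ gives $u\le w$. With $V>0$ it is the quotient, not the difference, that satisfies the maximum principle. So you need $\limsup_{S\to\infty}\sup_{\{\rho=S\}}u/w\le1$, and $u\to0$ says nothing about this because $w\to0$ as well. Any barrier that actually improves the decay, such as $\epsilon R^{\beta\nu-2/p}h^{\beta}$ with $\beta\nu>2/p$ in your first option, decays faster than the only available information $u\le\epsilon\rho^{-2/p}$. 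On $\{\rho=S\}$ you then only have $u/w\le(S/R)^{\beta\nu-2/p}\to\infty$, and the exhaustion does not close.

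Your borderline term $Bh^{2/(p\nu)}$ does not rescue this. To dominate $u$ at infinity it needs $B\geq\epsilon$, and then the conclusion is no better than the hypothesis. Your second option does not avoid the issue either. It starts from $u\le K\rho^{-\gamma}$ with $p\gamma>2$, but initially $\gamma=2/p$, so your $\min(p\gamma-2,\tau)$ equals $0$. The iteration therefore presupposes the very first improvement.

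\textbf{The missing idea.} The paper adds a second, slowly decaying solution whose coefficient vanishes as $S\to\infty$. Normalize so that $|\nabla\log h|^{2}\geq\rho^{-2}$, set $L=\Delta+\lambda|\nabla\log h|^{2}$, and pick $\epsilon_0,R_0$ so that $Lu\geq0$. Since $Lh^{s}=(s^{2}-s+\lambda)|\nabla\log h|^{2}h^{s}$, both roots $\alpha<\beta$ of $x^{2}-x+\lambda=0$ give solutions, and for small $\lambda$ one has $\alpha\nu<2/p<\beta\nu$. The barrier
$v=\epsilon S^{\alpha\nu-2/p}h^{\alpha}+\epsilon R^{\beta\nu-2/p}h^{\beta}$
dominates $u$ on both $\{\rho=R\}$ and $\{\rho=S\}$. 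The quotient comparison then applies on the bounded annulus. As $S\to\infty$ the first coefficient tends to $0$, giving $u\le\epsilon R^{\beta\nu-2/p}h^{\beta}$. Supersolutions $h^{\alpha}$ with $\alpha\nu<2/p$ would work equally well, but some slowly decaying term is indispensable.

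\textbf{Your final step.} It would hit the same obstruction if done by the quotient method, since a barrier decaying like $\rho^{-\nu}$ is compared against the intermediate bound $\rho^{-\beta\nu}$. The paper sidesteps this by inserting the intermediate bound into the right-hand side, so that $\Delta u\geq-f$ with $f$ explicit and no zeroth-order term. It then compares additively with $w=Kh-ch^{\gamma'}-dh^{\delta'}$, where $\gamma',\delta'>1$. Here $u-w$ is subharmonic, and $u-w\to0$ at infinity genuinely suffices.
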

Note that the decay is improved from $\rho^{-\frac{2}{p}}$ to $\rho^{-\nu}$
since $\nu>\frac{2}{p}$.

Here and below by $C^{2}$ asymptotically conical of order $\tau>0$
we mean that in the asymptotic coordinates we have $|\nabla_{g_{C}}^{j}(g-g_{C})|_{g_{C}}=O(r^{-\tau-j})$
for $j=0,1,2$ where $g_{C}$ is the cone metric.

Using Theorem \ref{thm:abstractdecay} and Theorem \ref{thm:bochnerinequality}
we obtain a decay estimate for Yang-Mills connections and Yang-Mills-Higgs
pairs on certain asymptotically conical manifolds (see Section 3 for
notation).
\begin{cor}
\label{cor:ac}Consider a complete $C^{2}$ asymptotically conical
Riemannian manifold of dimension $n\geq4$ and order $\tau>0$ with
finitely many ends and suppose the asymptotic cone $C_{E}$ of each
end $E$ satisfies $Ric_{C_{E}}\geq0$ and $K_{C_{E}}\geq0$. Then
there are constants $R_{0}>0$, $\epsilon_{0}>0$ and $C>0$ satisfying
the following. For each $R>R_{0}$ and $0<\epsilon<\epsilon_{0}$,
if a Yang-Mills connection $A$ on the manifold satisfies
\[
|F|\leq\epsilon r^{-2}\,\,\,\,\,in\,\,\,\,\,\{r\geq R\},
\]
then
\[
|F|\leq C\epsilon R^{\frac{(n-2)^{2}}{n-3}-2}r^{-\frac{(n-2)^{2}}{n-3}}\,\,\,\,\,in\,\,\,\,\,\{r\geq R\}.
\]
Here $r$ is the radial coordinate of the asymptotically conical structure
(see Definition 2.13 in \cite{BFM2024}). For Yang-Mills-Higgs pairs
we obtain
\[
q(A,\Phi)\leq\epsilon r^{-2}\,\,\,\,\,in\,\,\,\,\,\{r\geq R\}\,\,\,\,\,\Rightarrow\,\,\,\,\,q(A,\Phi)\leq C\epsilon R^{n-3}r^{-(n-1)}\,\,\,\,\,in\,\,\,\,\,\{r\geq R\},
\]
where $q(A,\Phi)=(|F|^{2}+|d_{A}\Phi|^{2})^{\frac{1}{2}}$.
\end{cor}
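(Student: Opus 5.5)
The plan is to apply Theorem \ref{thm:abstractdecay} with a suitable power of the curvature and with $h$ built from the Green's function on each end. For the harmonic function, on an asymptotically conical end of dimension $n\geq3$ there is a positive harmonic function $h$ outside a compact set with $h=r^{2-n}(1+o(1))$ and $\nabla h=\nabla(r^{2-n})(1+o(1))$. One obtains it by solving $\Delta h=0$ on the exterior region with boundary data, or by perturbing $r^{2-n}$, which is harmonic for the cone metric $g_C$. Since $|g-g_C|=O(r^{-\tau})$ to second order, $\Delta_g r^{2-n}=O(r^{-n-\tau})$, and the correction is handled by weighted elliptic theory on the end. With finitely many ends we define $h$ end by end. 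We set $\rho=h^{-1/(n-2)}$, so $\nu=n-2$. Then $\rho\sim r$ and $|\nabla\rho|\to1$, so $\rho$ is proper and $|\nabla\rho|\geq c_0>0$ outside a compact set. Conditions stated with $\rho\geq R$ and with $r\geq R$ are interchangeable up to adjusting $R_0$ and the constants.

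Next comes the differential inequality. For Yang--Mills, Theorem \ref{thm:bochnerinequality} is the refined Kato/Bochner estimate. Used with the Bochner--Weitzenböck formula for $F$, it should give a subharmonicity statement for $u=|F|^{\alpha}$ with $\alpha=\frac{n-3}{n-2}$, the Smith--Uhlenbeck exponent for the Kato constant $\frac{n-2}{n-3}$ in dimension $n$. The inequality has the form
\[
\Delta u\geq -C_1u\,|F|-\mathrm{Ric}\text{ and curvature terms}\cdot u .
\]
The curvature terms come from the Weitzenböck formula on $2$-forms, which involves $\mathrm{Ric}$ and the full curvature. On the cone, $\mathrm{Ric}_{C_E}\geq0$ and $K_{C_E}\geq0$ make these terms have a good sign. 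The deviation of $g$ from $g_C$ contributes $O(r^{-2-\tau})$, which gives the term $-C_2\rho^{-2-\tau}u$. Writing $|F|=u^{1/\alpha}$, the nonlinear term is $C_1u^{p+1}$ with $p=1/\alpha=\frac{n-2}{n-3}$. The hypothesis $|F|\leq\epsilon r^{-2}$ becomes $u\leq\epsilon^{\alpha}\rho^{-2\alpha}=\epsilon^{\alpha}\rho^{-2/p}$. We also need $p\nu=\frac{(n-2)^2}{n-3}>2$, which holds for $n\geq4$.

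Theorem \ref{thm:abstractdecay} then gives $u\leq C\epsilon^{\alpha}R^{\nu-2/p}\rho^{-\nu}$. Raising to the power $1/\alpha=p$ yields
\[
|F|\leq C\epsilon R^{p\nu-2}\rho^{-p\nu},
\]
and $p\nu=\frac{(n-2)^2}{n-3}$, which is the claimed estimate. For Yang--Mills--Higgs we do the same with $q(A,\Phi)$. The refined Kato constant for the coupled system, from \cite{CV2026}, is presumably $\frac{n-1}{n-2}$, so $p=\frac{n-1}{n-2}$ and $u=q^{(n-2)/(n-1)}$. The nonlinear terms $[F,\cdot]$ and $[\Phi\text{-terms}]$ are bounded by $q\cdot u$, assuming $|\Phi|$ is bounded or has been absorbed as in Theorem \ref{thm:bochnerinequality}. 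Then $p\nu=n-1$, and the exponent of $R$ is $n-3$, matching the statement. The smallness in $\epsilon^{\alpha}<\epsilon_0$ is arranged by shrinking $\epsilon_0$.

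I expect the main obstacle to be verifying that the curvature terms in the Weitzenböck formula on the actual asymptotically conical metric are bounded below by $-C\rho^{-2-\tau}$. This is where the sign conditions on the cone are used. On a cone $C(\Sigma)$, the curvature operator scales like $r^{-2}$ times that of the link, and the terms in the Bochner formula for $2$-forms involving $\mathrm{Ric}$ and sectional curvature are nonnegative under $\mathrm{Ric}_{C_E}\geq0$ and $K_{C_E}\geq0$. The $C^2$ closeness then makes the error in curvature $O(r^{-2-\tau})$. A secondary technical point is the construction of $h$ with gradient control, which I would handle by standard weighted Schauder estimates on the end.
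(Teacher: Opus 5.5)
Your proposal is correct and is essentially the paper's proof. The chain is the same: $h\sim r^{2-n}$ with $\rho=h^{-1/(n-2)}$; the bounds $\mathrm{Ric},K\geq -C_0r^{-2-\tau}$ from the cone hypotheses and $C^2$ closeness; Theorem \ref{thm:bochnerinequality} with $u=|F|^{\frac{n-3}{n-2}}$ (resp.\ $u=q^{\frac{n-2}{n-1}}$) and $|F|\leq q=u^{p}$; then Theorem \ref{thm:abstractdecay} with $\nu=n-2$, raising the result to the power $p$.

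The differences are minor. The paper obtains $h$ and its gradient asymptotics by citing Benatti--Fogagnolo--Mazzieri, rather than by your weighted-elliptic perturbation of $r^{2-n}$. No bound on $|\Phi|$ is needed, because the $|[F,\Phi]|^{2}$ and $|[d_A\Phi,\Phi]|^{2}$ terms cancel inside Theorem \ref{thm:bochnerinequality}. Finally, what you call the ``Kato constant'' is really $p=1/(1-\epsilon)$ (the refined Kato constants are $1+\frac{1}{n-2}$ and $1+\frac{1}{n-1}$), which does not affect your exponents.
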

Note that the quadratic smallness assumptions above are natural from
the scaling point of view. Indeed on $R^{n}$ we have $q(A_{\lambda},\Phi_{\lambda})(x)=\lambda^{2}q(A,\Phi)(\lambda x)$
where $A_{\lambda}(x)=\lambda A(\lambda x)$ and $\Phi_{\lambda}(x)=\lambda\Phi(\lambda x)$,
so $|x|^{2}q(A,\Phi)(x)$ is scale-invariant. These assumptions are
also consistent with the quantities appearing in epsilon-regularity
estimates (see Nakajima \cite{N1988} and Afuni \cite{A2019}).

Note that Corollary \ref{cor:ac} applies to $C^{2}$ asymptotically
flat and asymptotically locally Euclidean manifolds of order $\tau>0$.

For Yang-Mills connections in dimension $n=4$ our result recovers
the $r^{-4}$ decay rate of Råde \cite{R1993} and Groisser and Parker
\cite{GP1997}. In dimensions $n\geq5$ our result improves the decay
obtained by Chen and Zhu \cite{CZ2024} from $r^{-(n-1)}$ to $r^{-\frac{(n-2)^{2}}{n-3}}$.
Indeed from $|F|=O(r^{-(n-1)})$ we obtain $\left|F\right|\leq\epsilon r^{-2}$
outside a large ball, so Corollary \ref{cor:ac} gives the improved
decay.

We extend Corollary \ref{cor:ac} to certain product manifolds (see
Section 3 for notation).
\begin{cor}
\label{cor:product}Consider a complete $C^{2}$ asymptotically conical
Riemannian manifold $X$ of dimension $k\geq4$ and order $\tau>0$
with finitely many ends and suppose the asymptotic cone $C_{E}$ of
each end $E$ satisfies $Ric_{C_{E}}\geq0$ and $K_{C_{E}}\geq0$.
Consider a compact Riemannian manifold $Y$ with $Ric_{Y}\geq0$ and
$K_{Y}\geq0$. Let $M^{n}=X^{k}\times Y^{n-k}$ with the product metric.
Then there are constants $R_{0}>0$, $\epsilon_{0}>0$ and $C>0$
satisfying the following. For each $R>R_{0}$ and $0<\epsilon<\epsilon_{0}$,
if a Yang-Mills connection $A$ on the product manifold $M$ satisfies
\[
|F|\leq\epsilon r^{-2}\,\,\,\,\,in\,\,\,\,\,\{r\geq R\},
\]
then
\[
|F|\leq C\epsilon R^{\sigma-2}r^{-\sigma}\,\,\,\,\,in\,\,\,\,\,\{r\geq R\},
\]
where $r$ is the radial coordinate of the asymptotically conical
structure of $X$ and $\sigma=\frac{(n-2)(k-2)}{n-3}$. For Yang-Mills-Higgs
pairs we obtain
\[
q(A,\Phi)\leq\epsilon r^{-2}\,\,\,\,\,in\,\,\,\,\,\{r\geq R\}\,\,\,\,\,\Rightarrow\,\,\,\,\,q(A,\Phi)\leq C\epsilon R^{\sigma'-2}r^{-\sigma'}\,\,\,\,\,in\,\,\,\,\,\{r\geq R\},
\]
where $q(A,\Phi)=(|F|^{2}+|d_{A}\Phi|^{2})^{\frac{1}{2}}$ and $\sigma'=\frac{(n-1)(k-2)}{n-2}$.
\end{cor}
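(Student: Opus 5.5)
We plan to deduce Corollary \ref{cor:product} from Theorem \ref{thm:abstractdecay} applied on $M=X\times Y$, with $u=|F|$ (resp. $u=q(A,\Phi)$), $p=1$, and a harmonic function pulled back from $X$. The refined Bochner inequality (Theorem \ref{thm:bochnerinequality}) is taken in the form
\[
\Delta u^{1-\alpha}\geq-C_{1}u^{2-\alpha}-C_{2}\,\mathcal{R}\,u^{1-\alpha},
\]
where $1-\alpha$ is the Kato exponent: $\alpha=\frac{1}{n-2}$ for Yang-Mills in dimension $n$, and $\alpha=\frac{1}{n-1}$ for Yang-Mills-Higgs. Here $\mathcal{R}$ is the negative part of the curvature terms (Ricci and sectional) in the Weitzenb\"ock formula.

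\textbf{Curvature term.} On $M$ the curvature is that of $X$ plus that of $Y$, with mixed sectional curvatures zero. Since $Ric_Y\geq0$ and $K_Y\geq0$, $\mathcal{R}$ is controlled by the negative part of the curvature of $X$. On each end this is $O(r^{-2-\tau})$, because the cone $C_E$ has $Ric\geq0$, $K\geq0$ and the metric is $C^2$ asymptotically conical of order $\tau$.

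\textbf{Applying the theorem.} Set $v=u^{1-\alpha}$, so that $v^{p+1}=u^{2-\alpha}$ with $p=\frac{1}{1-\alpha}\cdot(1-\alpha)$ adjusted accordingly. Concretely, $u^{2-\alpha}=v\cdot u=v\cdot v^{1/(1-\alpha)}$, so $p=\frac{1}{1-\alpha}$, and $u\leq\epsilon r^{-2}$ becomes $v\leq\epsilon^{1-\alpha}r^{-2(1-\alpha)}=\epsilon'\rho^{-2/p}$. For the harmonic function we take the Green-type function on $X$ (dimension $k\geq4$): on each end of $X$ there is a positive harmonic $h\sim r^{2-k}$, obtained by the standard construction on asymptotically conical manifolds with finitely many ends (e.g. the minimal positive Green's function, with asymptotics $h=r^{2-k}(1+o(1))$ and gradient asymptotics from the $C^2$ condition). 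Its pullback $h\circ\pi_X$ is harmonic on $M$. With $\nu=k-2$ we have $\rho=h^{-1/\nu}\sim r$, which is proper on $M$ since $Y$ is compact, and $|\nabla\rho|\to1$. The condition $p\nu>2$ reads $\frac{k-2}{1-\alpha}>2$, which holds for $k\geq4$.

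\textbf{Conclusion.} Theorem \ref{thm:abstractdecay} gives $v\leq C\epsilon^{1-\alpha}R^{\nu-2(1-\alpha)}\rho^{-\nu}$. Raising to the power $\frac{1}{1-\alpha}$ yields
\[
u\leq C\epsilon R^{\frac{\nu}{1-\alpha}-2}r^{-\frac{\nu}{1-\alpha}},
\]
with $\frac{\nu}{1-\alpha}=\frac{(k-2)(n-2)}{n-3}=\sigma$ in the Yang-Mills case and $\frac{(k-2)(n-1)}{n-2}=\sigma'$ in the Yang-Mills-Higgs case, as claimed. We replace $\rho$ by $r$ using the comparability $\rho\sim r$, and take $R_0$ large.

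\textbf{Main obstacle.} The hardest part is producing the harmonic function with the precise asymptotics $\rho=r(1+o(1))$ and $|\nabla\rho|\geq c_0$ on every end simultaneously; the comparability of $\rho$ and $r$ in the conclusion also depends on it. We would construct $h$ by solving $\Delta h=-\Delta(\chi r^{2-k})$, with $\chi$ a cutoff supported on the ends. The error $\Delta(\chi r^{2-k})=O(r^{-k-\tau})$, since the cone metric makes $r^{2-k}$ harmonic. Weighted estimates on asymptotically conical manifolds then give a correction of order $O(r^{2-k-\tau'})$, together with gradient bounds. This yields the required asymptotics, and positivity follows outside a compact set.
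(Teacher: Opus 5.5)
The main argument matches the paper. You use the same functions $|F|^{\frac{n-3}{n-2}}$ and $q(A,\Phi)^{\frac{n-2}{n-1}}$, the same $p=\frac{1}{1-\alpha}$ (your opening ``$p=1$'' is a slip), $\nu=k-2$, and a harmonic function pulled back from $X$. You also use the bound $Ric_M,K_M\geq-C_0r^{-2-\tau}$ and raise the conclusion to the power $p$. On the curvature step, $K$ here is the Weitzenb\"ock operator on two-forms, not sectional curvature. On $\Lambda^1X\wedge\Lambda^1Y$ it acts as $Ric_X+Ric_Y$, not as zero, and that block is where $Ric_Y\geq0$ is used. Your conclusion is still correct.

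The step that fails as written is your construction of $h$ in the last paragraph. Suppose you solve $\Delta w=-\Delta(\chi r^{2-k})$ on all of $X$ with $w\to0$ at infinity. Then $h=\chi r^{2-k}+w$ is harmonic on the whole complete manifold $X$ and tends to $0$ on every end, so $h\equiv0$ by the maximum principle. In particular the correction cannot be $O(r^{2-k-\tau'})$ with the gradient bounds you claim. Such a $w$ would force $\int_X\Delta(\chi r^{2-k})=0$, but this integral equals the flux $(2-k)\sum_E|\Sigma_E|\neq0$ through large spheres, where $\Sigma_E$ is the link of $C_E$.

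Since $h$ only needs to be harmonic outside a compact set, a correct construction must allow a source in the core, e.g.\ the Green's function with a pole there. You do not need to build anything, though. The paper simply applies Lemma \ref{lem:harmonicac} (from Benatti--Fogagnolo--Mazzieri) to the factor $X$. This is legitimate because $k\geq3$ and $Ric_X\geq-C_0r^{-2-\tau}$ gives quadratically asymptotically nonnegative Ricci curvature.

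Also, with several ends the leading coefficients $a_E$ of $h$ differ from end to end. So $\rho=r(1+o(1))$ and $|\nabla\rho|\to1$ fail in general. Only $\rho\sim r$ and $|\nabla\rho|\geq c_0$ are needed, and these are exactly what the lemma provides.
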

This work is organized as follows. In Section 2 we prove Theorem \ref{thm:abstractdecay}.
In Section 3 we introduce basic concepts and notation of gauge theory
and prove a refined Bochner inequality (Theorem \ref{thm:bochnerinequality}).
In Section 4 we prove Corollary \ref{cor:ac} and Corollary \ref{cor:product}.

\section{Proof of Theorem \ref{thm:abstractdecay}}

The proof is inspired by an argument of Groisser and Parker \cite{GP1997}.

We recall a useful comparison principle for Schrödinger operators
(see Section 2.5 of \cite{PW1984}).
\begin{lem}
\label{lem:comparison}Consider a bounded domain $D$ in a Riemannian
manifold and the operator $L=\Delta+a$ for some function $a\in C(\bar{D})$.
Suppose there is a positive function $v\in C^{2}(D)\cap C(\bar{D})$
satisfying $Lv=0$ in $D$. If a function $u\in C^{2}(D)\cap C(\bar{D})$
satisfies $Lu\geq0$ in $D$ and $u\leq0$ in $\partial D$ then $u\leq0$
in $D$.
\end{lem}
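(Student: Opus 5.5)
The plan is the classical ground-state (quotient) argument: divide $u$ by $v$ and apply the maximum principle to $w=u/v$, for which the zeroth-order term disappears. Since $v>0$ on $\bar D$ is continuous and $\bar D$ is compact, $v\geq c>0$ on $\bar D$. Hence $w\in C^{2}(D)\cap C(\bar D)$, and the boundary condition $u\leq0$ on $\partial D$ gives $w\leq0$ on $\partial D$. It will then suffice to show that $w$ satisfies an elliptic inequality without a zeroth-order term, to which the weak maximum principle applies.

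The key computation writes $u=wv$ and expands:
\[
\Delta u=v\Delta w+2\langle\nabla v,\nabla w\rangle+w\Delta v .
\]
Using $\Delta v=-av$ (this is $Lv=0$), we get
\[
Lu=\Delta u+au=v\Delta w+2\langle\nabla v,\nabla w\rangle .
\]
Dividing by $v>0$, the hypothesis $Lu\geq0$ becomes
\[
\Delta w+2\langle\nabla\log v,\nabla w\rangle\geq0 \quad\text{in } D .
\]
This operator has no zeroth-order term, and its drift coefficient $2\nabla\log v$ is continuous in $D$. The potential $a$ has disappeared completely, so no sign condition on $a$ is needed.

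To finish, I apply the weak maximum principle to $w$. Suppose $\max_{\bar D}w=m>0$. Since $w\leq0$ on $\partial D$, the maximum is attained at an interior point, so I must rule out an interior positive maximum. This is the one delicate step, because the drift is only locally bounded (it may blow up near $\partial D$) and the inequality is not strict. I would handle it with the usual perturbation, working on compact subdomains. Fix $\delta>0$ with $\delta<m$, and let $D'=\{w>\delta\}$. Then $\bar{D'}\subset\{w\geq\delta\}$, which is a compact subset of $D$ because $w\leq0<\delta$ on $\partial D$. On $\bar{D'}$ the drift is bounded. I will choose a function $z$ (for instance $z=e^{\lambda x_{1}}$ in a coordinate chart, or a finite patching of such functions) with
\[
\Delta z+2\langle\nabla\log v,\nabla z\rangle>0 \quad\text{on } \bar{D'} ,
\]
taking $\lambda$ large depending on the drift bound. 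Then $w+\eta z$ satisfies a strict inequality, so it has no interior maximum in $D'$. Its maximum over $\bar{D'}$ is therefore at most $\delta+\eta\sup z$. Letting $\eta\to0$ gives $w\leq\delta$ on $D'$, which contradicts $\max w=m>\delta$. Therefore $w\leq0$, and so $u=wv\leq0$ in $D$.

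A cleaner alternative for the last step is Hopf's strong maximum principle, applied locally to the linear operator with locally bounded coefficients. It shows that the set where $w=m$ is open, and it is closed since $w$ is continuous. This set would then be a union of components of $D$ whose closure meets $\partial D$, where $w\leq0<m$, which is impossible.

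The only real obstacle is the non-global construction of the barrier $z$ on a manifold, where a single coordinate chart is not available. The strong maximum principle version avoids this, since it only requires local charts.
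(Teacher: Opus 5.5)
Your proposal is correct and follows the paper's proof: set $w=u/v$, use $Lv=0$ to remove the zeroth-order term so that $\Delta w+2\langle\nabla\log v,\nabla w\rangle\geq0$ in $D$, and conclude from the maximum principle together with $w\leq0$ on $\partial D$. Your extra care about the drift $2\nabla\log v$ possibly being unbounded near $\partial D$ goes beyond the paper's one-line appeal to the maximum principle. The local Hopf strong maximum principle route you describe is the cleaner way to close that step, since the patched global barrier is left vague.
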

\begin{proof}
Take $w=\frac{u}{v}$. By a direct calculation we have
\[
\Delta w=\frac{vLu-uLv}{v^{2}}-\frac{2}{v}\langle\nabla w,\nabla v\rangle\geq-\frac{2}{v}\langle\nabla w,\nabla v\rangle.
\]
So by the maximum principle we obtain
\[
\sup_{D}w\leq\sup_{\partial D}w\leq0.
\]
From this we conclude that $u\leq0$ in $D$.
\end{proof}
Now we prove Theorem \ref{thm:abstractdecay}.

For $0<\lambda<\frac{1}{4}$ take the roots $\alpha<\beta$ of the
equation $x^{2}-x+\lambda=0$. Note that both roots are positive and
$\alpha\to0$ and $\beta\to1$ as $\lambda\to0$. Take $\gamma=\beta(p+1)-\frac{2}{\nu}$
and $\delta=\beta+\frac{\tau}{\nu}$. Note that $\gamma\to p+1-\frac{2}{\nu}$
and $\delta\to1+\frac{\tau}{\nu}$ as $\lambda\to0$. From now on
we fix a small $\lambda$ such that $\alpha\nu-\frac{2}{p}<0$, $\gamma>1$
and $\delta>1$, where the last two inequalities use $p\nu>2$ and
$\tau>0$.

By rescaling $h$ and hence $\rho$ we can assume $|\nabla\log h|^{2}\geq\rho^{-2}$.
We can assume $h$ is defined in $\{\rho>R_{0}$\}. Take $\epsilon_{0}>0$
such that $C_{1}\epsilon_{0}^{p}=\frac{\lambda}{2}$. Increasing $R_{0}$
if necessary take $R_{0}>0$ such that $C_{2}R_{0}^{-\tau}<\frac{\lambda}{2}$.
Fix $\epsilon<\epsilon_{0}$ and $R>R_{0}$.

The proof has two steps.

First we obtain a non-sharp estimate using the comparison principle
of Lemma \ref{lem:comparison}. Take $L=\Delta+\lambda|\nabla\log h|^{2}$.
In $A_{R,S}=\{R<\rho<S\}$ we have
\[
Lu\geq(\lambda|\nabla\log h|^{2}-C_{1}u^{p}-C_{2}\rho^{-2-\tau})u\geq(\lambda\rho^{-2}-C_{1}u^{p}-C_{2}\rho^{-2-\tau})u\geq0.
\]
Take $v=ah^{\alpha}+bh^{\beta}$ where $a=\epsilon S^{\alpha\nu-\frac{2}{p}}$
and $b=\epsilon R^{\beta\nu-\frac{2}{p}}$. Note that for each $s>0$
we have
\[
Lh^{s}=(s^{2}-s+\lambda)|\nabla\log h|^{2}h^{s}.
\]
So from the choices of $\alpha$, $\beta$, $a$ and $b$ we have
\[
Lu\geq Lv=0\,\,\,\,\,in\,\,\,\,\,A_{R,S}\,\,\,\,\,and\,\,\,\,\,u\leq v\,\,\,\,\,in\,\,\,\,\,\partial A_{R,S}.
\]
By Lemma \ref{lem:comparison} we have $u\leq v$ in $A_{R,S}$, that
is
\[
u\leq\epsilon S^{\alpha\nu-\frac{2}{p}}h^{\alpha}+\epsilon R^{\beta\nu-\frac{2}{p}}h^{\beta}\,\,\,\,\,in\,\,\,\,\,A_{R,S}.
\]
Since $\alpha\nu-\frac{2}{p}<0$, letting $S\to\infty$ we obtain
\[
u\leq\epsilon R^{\beta\nu-\frac{2}{p}}h^{\beta}\,\,\,\,\,in\,\,\,\,\,\{\rho\geq R\}.
\]

Second we obtain the sharp estimate using the usual maximum principle.
Using the estimate above and the original differential inequality
we obtain
\[
\Delta u\geq-C_{1}\epsilon^{p+1}R^{(\beta\nu-\frac{2}{p})(p+1)}h^{\beta(p+1)}-C_{2}\epsilon R^{\beta\nu-\frac{2}{p}}\rho^{-2-\tau}h^{\beta}.
\]
Recall that $\gamma>1$ and $\delta>1$. Take $w=Kh-ch^{\gamma}-dh^{\delta}$
where
\[
c=\frac{C_{1}\epsilon^{p+1}R^{(\beta\nu-\frac{2}{p})(p+1)}}{\gamma(\gamma-1)},\,\,\,\,\,d=\frac{C_{2}\epsilon R^{\beta\nu-\frac{2}{p}}}{\delta(\delta-1)},
\]
\[
K=\epsilon R^{\nu-\frac{2}{p}}+\frac{C_{1}\epsilon^{p+1}R^{\nu-\frac{2}{p}}}{\gamma(\gamma-1)}+\frac{C_{2}\epsilon R^{\nu-\frac{2}{p}-\tau}}{\delta(\delta-1)}.
\]
Note that $\rho^{-2}h^{\gamma}=h^{\beta(p+1)}$, $\rho^{-2}h^{\delta}=\rho^{-2-\tau}h^{\beta}$
and
\[
\Delta w=-c\gamma(\gamma-1)|\nabla\log h|^{2}h^{\gamma}-d\delta(\delta-1)|\nabla\log h|^{2}h^{\delta}
\]
\[
\leq-c\gamma(\gamma-1)\rho^{-2}h^{\gamma}-d\delta(\delta-1)\rho^{-2}h^{\delta}.
\]
So from the choices of $\gamma$, $c$, $\delta$, $d$ and $K$ we
have
\[
\Delta u\geq\Delta w\,\,\,\,\,in\,\,\,\,\,\{\rho>R\}\,\,\,\,\,and\,\,\,\,\,u\leq w\,\,\,\,\,in\,\,\,\,\,\{\rho=R\}.
\]
Since $u-w\to0$ as $\rho\to\infty$, by the usual maximum principle
(and a standard exhaustion argument) we have $u\leq w$ in $\{\rho>R\}$.
So we obtain
\[
u\leq Kh\leq C\epsilon R^{\nu-\frac{2}{p}}h\,\,\,\,\,in\,\,\,\,\,\{\rho\geq R\},
\]
where $C=1+\frac{C_{1}\epsilon_{0}^{p}}{\gamma(\gamma-1)}+\frac{C_{2}R_{0}^{-\tau}}{\delta(\delta-1)}$.

\section{Refined Bochner inequality}

In this section we obtain a refined Bochner inequality for Yang-Mills
connections and Yang-Mills-Higgs pairs (Theorem \ref{thm:bochnerinequality}).
We prove this result by combining Bochner formulas of Bourguignon
and Lawson \cite{BL1981} and refined Kato inequalities of Smith and
Uhlenbeck \cite{SU2019} and Cibotaru and Vieira \cite{CV2026}.

Consider a Riemannian manifold. We denote the Riemann curvature and
Ricci curvature of the manifold by $Rm$ and $Ric$. We use the convention
$Ric(X,Y)=Rm(X,e_{i},Y,e_{i})$ in a local orthonormal frame $\{e_{i}\}$.

Consider a Riemannian vector bundle $E$ on the manifold and a metric
connection $A$ on the bundle with curvature $F$. We always use an
Ad-invariant metric on $so(E)$. We use the convention $\Delta_{A}=-\nabla_{A}^{*}\nabla_{A}$.

For a one-form $\omega$ on the manifold with values in $so(E)$ we
define the one-forms $\tilde{F}(\omega)$ and $Ric(\omega)$ by
\[
\tilde{F}(\omega)(X)=[F(e_{i},X),\omega(e_{i})],
\]
\[
Ric(\omega)(X)=Ric(X,e_{i})\omega(e_{i}).
\]

For a two-form $\omega$ on the manifold with values in $so(E)$ we
define the two-forms $\tilde{F}(\omega)$ and $K(\omega)$ by
\[
\tilde{F}(\omega)(X,Y)=[F(X,e_{i}),\omega(Y,e_{i})]-[F(Y,e_{i}),\omega(X,e_{i})],
\]
\[
K(\omega)(X,Y)=\omega(Ric(X),Y)+\omega(X,Ric(Y))-Rm(X,Y,e_{i},e_{j})\omega(e_{i},e_{j}).
\]

We recall Bochner formulas of Bourguignon and Lawson \cite{BL1981}
(Theorem 3.2 and Theorem 3.10) in our notation.
\begin{lem}
\label{lem:bourguignonlawson}Consider a Riemannian vector bundle
$E$ on a Riemannian manifold and a metric connection $A$ on the
bundle with curvature $F$.

(a) For a one-form $\omega$ on the manifold with values in $so(E)$
we have
\[
\Delta_{A}\omega=-(d_{A}d_{A}^{*}+d_{A}^{*}d_{A})\omega+\tilde{F}(\omega)+Ric(\omega).
\]

(b) For a two-form $\omega$ on the manifold with values in $so(E)$
we have
\[
\Delta_{A}\omega=-(d_{A}d_{A}^{*}+d_{A}^{*}d_{A})\omega+\tilde{F}(\omega)+K(\omega).
\]
\end{lem}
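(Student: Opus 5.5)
We verify both formulas pointwise at an arbitrary point $x$, working in a local orthonormal frame $\{e_{i}\}$ that is synchronous at $x$, i.e. $\nabla e_{i}=0$ at $x$. In such a frame
\[
d_{A}\omega=e^{i}\wedge\nabla_{A,e_{i}}\omega,\qquad d_{A}^{*}\omega=-\iota_{e_{i}}\nabla_{A,e_{i}}\omega,\qquad\Delta_{A}\omega=\nabla_{A,e_{i}}\nabla_{A,e_{i}}\omega\quad\text{at }x.
\]
Here $\nabla_{A}$ is the coupled connection on $\Lambda^{k}T^{*}M\otimes so(E)$, built from the Levi-Civita connection and the adjoint action of $A$. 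The argument is the classical Weitzenböck computation.

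First we expand $(d_{A}d_{A}^{*}+d_{A}^{*}d_{A})\omega$ in this frame. The terms with two derivatives in the same direction combine to give $-\nabla_{A,e_{i}}\nabla_{A,e_{i}}\omega$. The remaining terms assemble into a commutator,
\[
-e^{j}\wedge\iota_{e_{i}}\bigl(\nabla_{A,e_{j}}\nabla_{A,e_{i}}-\nabla_{A,e_{i}}\nabla_{A,e_{j}}\bigr)\omega
=-e^{j}\wedge\iota_{e_{i}}R^{\nabla_{A}}(e_{j},e_{i})\omega .
\]
This is the same algebra as in the scalar Weitzenböck formula, and it uses only $e^{j}\wedge\iota_{e_{i}}+\iota_{e_{i}}e^{j}\wedge=\delta_{ij}$.

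Next we split the curvature of the coupled connection as
\[
R^{\nabla_{A}}(X,Y)\omega=R^{LC}(X,Y)\omega+[F(X,Y),\omega],
\]
where the bracket term comes from the adjoint action of $A$ on $so(E)$. We then evaluate each piece separately.

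The Levi-Civita piece is the standard one. For one-forms it gives $Ric(\omega)$. For two-forms it gives $\omega(Ric(X),Y)+\omega(X,Ric(Y))$ together with the full Riemann term. We need to check that the full Riemann term equals $-Rm(X,Y,e_{i},e_{j})\omega(e_{i},e_{j})$ under the paper's convention $Ric(X,Y)=Rm(X,e_{i},Y,e_{i})$. This may require the first Bianchi identity, and the constant in front of the Riemann term (whether $\tfrac12$ is absorbed) depends on the normalization of two-forms.

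The gauge piece gives $-e^{j}\wedge\iota_{e_{i}}[F(e_{j},e_{i}),\omega]$. For a one-form this evaluates at $X$ to $[F(e_{i},X),\omega(e_{i})]$ up to sign, which is $\tilde{F}(\omega)$. For a two-form, evaluating on $(X,Y)$ produces the two antisymmetrized terms $[F(X,e_{i}),\omega(Y,e_{i})]-[F(Y,e_{i}),\omega(X,e_{i})]$.

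Rearranging gives
\[
\Delta_{A}\omega=-(d_{A}d_{A}^{*}+d_{A}^{*}d_{A})\omega+\tilde{F}(\omega)+Ric(\omega)\ \text{or}\ K(\omega),
\]
which is the statement. Alternatively, since the lemma is quoted from Bourguignon and Lawson (Theorem 3.2 and Theorem 3.10 of \cite{BL1981}), it suffices to translate their formulas into our conventions. Their operator is $\nabla^{*}\nabla=-\Delta_{A}$, and their curvature operators, written in a frame, are exactly $\tilde{F}$, $Ric$ and $K$.

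The main obstacle is bookkeeping of signs and normalizations, not any conceptual step. Specifically, we must reconcile:
\begin{itemize}
\item the sign conventions of $Rm$ and $Ric$;
\item the factor conventions for two-forms, namely whether $\omega=\tfrac12\omega_{ij}e^{i}\wedge e^{j}$ and how the interior product is normalized;
\item the order of arguments in the bracket terms, which has to come out as $[F(e_{i},X),\omega(e_{i})]$.
\end{itemize}
To settle these we will test the final formula on simple cases. A flat connection on the round sphere fixes the curvature signs, and a trivial metric with a constant-curvature connection fixes the sign of $\tilde{F}$.
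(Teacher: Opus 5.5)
Your route is correct, but it is not the paper's. The paper does no computation: it simply quotes Theorems 3.2 and 3.10 of Bourguignon--Lawson, rewritten in its own notation. That is only the ``alternative'' you mention at the end. Your main argument is self-contained: the frame identity $(d_Ad_A^*+d_A^*d_A)\omega=\nabla_A^*\nabla_A\omega-e^j\wedge\iota_{e_i}R^{\nabla_A}(e_j,e_i)\omega$, followed by the splitting $R^{\nabla_A}=R^{LC}+\mathrm{ad}(F)$. The citation is shorter, but it leaves every sign and normalization to an unstated translation of Bourguignon--Lawson's conventions (their $\nabla^*\nabla$ and their curvature notation). Your derivation instead makes the exact forms of $\tilde F$ and $K$ checkable, and the later hypotheses $K\geq a_1$, $Ric\geq a_2$ depend on those forms.

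As written, however, your proposal stops exactly where the content of the lemma lies. One of your worries is also misplaced. Nothing depends on a normalization of two-forms. Once you fix $d_A=e^i\wedge\nabla_{A,e_i}$, $d_A^*=-\iota_{e_i}\nabla_{A,e_i}$ and $e^j\wedge\iota_{e_i}+\iota_{e_i}e^j\wedge=\delta_{ij}$, every term is a value $\omega(\cdot,\cdot)$ and the constants are forced.

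Concretely, take $R(X,Y)=\nabla_X\nabla_Y-\nabla_Y\nabla_X-\nabla_{[X,Y]}$. The Levi-Civita term evaluated on $(X,Y)$ is
\[
\omega(Ric(X),Y)+\omega(X,Ric(Y))+\omega\bigl(e_i,R(X,e_i)Y-R(Y,e_i)X\bigr).
\]
The first Bianchi identity gives $R(X,e_i)Y-R(Y,e_i)X=R(X,Y)e_i$. The paper's convention $Ric(X,Y)=Rm(X,e_i,Y,e_i)$, with $Ric$ the usual Ricci tensor, forces $Rm(X,Y,Z,W)=\langle R(X,Y)W,Z\rangle$. So the last term equals $\langle R(X,Y)e_i,e_j\rangle\omega(e_i,e_j)=-Rm(X,Y,e_i,e_j)\omega(e_i,e_j)$. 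This is the full double sum with coefficient exactly $-1$ and no factor $\tfrac12$. On the unit sphere it gives $K=2(n-2)$, as it should.

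The gauge term has no sign ambiguity either. For one-forms, at $X$ it is $-[F(X,e_i),\omega(e_i)]=[F(e_i,X),\omega(e_i)]$. For two-forms, on $(X,Y)$ it is $-[F(X,e_i),\omega(e_i,Y)]+[F(Y,e_i),\omega(e_i,X)]$, which equals $\tilde F(\omega)(X,Y)$ after using $\omega(e_i,\cdot)=-\omega(\cdot,e_i)$.

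Test cases can only pin down constants once the general form of the Riemann term is known, and it is the Bianchi step that establishes that form. With these lines filled in, your argument proves the lemma exactly as stated.
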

Recall that a Yang-Mills connection $A$ is a metric connection on
$E$ satisfying $d_{A}^{*}F=0$, and a Yang-Mills-Higgs pair $(A,\Phi)$
is a metric connection $A$ on $E$ and a section $\Phi$ of $so(E)$
satisfying $d_{A}^{*}F=[d_{A}\Phi,\Phi]$ and $\Delta_{A}\Phi=0$.
Note that a Yang-Mills connection can be viewed as a Yang-Mills-Higgs
pair by taking $\Phi=0$.

For one-forms $\alpha$ and $\beta$ on the manifold with values in
$so(E)$ we define the two-form $[\alpha,\beta]$ by
\[
[\alpha,\beta](X,Y)=[\alpha(X),\beta(Y)]-[\alpha(Y),\beta(X)].
\]

Using Lemma \ref{lem:bourguignonlawson} we obtain Bochner formulas
for Yang-Mills-Higgs pairs. In \cite{F2023} Fadel obtained these
formulas for three-manifolds.
\begin{lem}
\label{lem:fadel}For a Yang-Mills-Higgs pair $(A,\Phi)$ with curvature
$F$ on a Riemannian manifold we have
\[
\Delta_{A}d_{A}\Phi=-[[d_{A}\Phi,\Phi],\Phi]+2\tilde{F}(d_{A}\Phi)+Ric(d_{A}\Phi),
\]
\[
\Delta_{A}F=-[[F,\Phi],\Phi]+[d_{A}\Phi,d_{A}\Phi]+\tilde{F}(F)+K(F).
\]
\end{lem}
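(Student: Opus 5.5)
The plan is to apply Lemma \ref{lem:bourguignonlawson} to $\omega=d_{A}\Phi$ (part (a)) and $\omega=F$ (part (b)). In each case the task is to compute the term $-(d_{A}d_{A}^{*}+d_{A}^{*}d_{A})\omega$ using the Yang-Mills-Higgs equations $d_{A}^{*}F=[d_{A}\Phi,\Phi]$ and $\Delta_{A}\Phi=0$, together with the Bianchi identity $d_{A}F=0$ and the identity $d_{A}d_{A}\Phi=[F,\Phi]$.

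\textbf{The one-form $d_{A}\Phi$.} On sections, $d_{A}^{*}d_{A}\Phi=\nabla_{A}^{*}\nabla_{A}\Phi=-\Delta_{A}\Phi=0$, so $d_{A}d_{A}^{*}d_{A}\Phi=0$. For the other term, $d_{A}d_{A}\Phi=[F,\Phi]$, so I need
\[
d_{A}^{*}[F,\Phi]=[d_{A}^{*}F,\Phi]\pm\iota(d_{A}\Phi)F .
\]
Here the second term is the contraction $X\mapsto[F(e_{i},X),\nabla_{e_{i}}\Phi]$ with some sign. This is a Leibniz rule, since the bracket is Ad-invariant and parallel, and computing it in a normal frame gives the contraction term explicitly as $\pm\tilde{F}(d_{A}\Phi)$. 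Substituting $d_{A}^{*}F=[d_{A}\Phi,\Phi]$ gives
\[
-d_{A}^{*}d_{A}d_{A}\Phi=-[[d_{A}\Phi,\Phi],\Phi]+\tilde{F}(d_{A}\Phi),
\]
provided the sign works out. Adding the $\tilde{F}(d_{A}\Phi)+Ric(d_{A}\Phi)$ from Lemma \ref{lem:bourguignonlawson}(a) then yields the first formula, including the factor $2$ in front of $\tilde{F}(d_{A}\Phi)$.

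\textbf{The two-form $F$.} Bianchi gives $d_{A}^{*}d_{A}F=0$. For the remaining term, $d_{A}d_{A}^{*}F=d_{A}[d_{A}\Phi,\Phi]$. Using the Leibniz rule for the bracket of a $1$-form with a $0$-form,
\[
d_{A}[d_{A}\Phi,\Phi]=[d_{A}d_{A}\Phi,\Phi]-[d_{A}\Phi\wedge d_{A}\Phi].
\]
With the paper's convention, $[d_{A}\Phi\wedge d_{A}\Phi](X,Y)=[\nabla_{X}\Phi,\nabla_{Y}\Phi]-[\nabla_{Y}\Phi,\nabla_{X}\Phi]$, which is exactly $[d_{A}\Phi,d_{A}\Phi]$. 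Since $d_{A}d_{A}\Phi=[F,\Phi]$, this gives
\[
-d_{A}d_{A}^{*}F=-[[F,\Phi],\Phi]+[d_{A}\Phi,d_{A}\Phi].
\]
Combining with Lemma \ref{lem:bourguignonlawson}(b) gives the second formula.

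\textbf{Main obstacle.} The hard part is sign bookkeeping. I must check the sign of $d_{A}^{*}$ on forms, the ordering convention in $d_{A}^{*}[F,\Phi]$, and that the contraction term matches $+\tilde{F}(d_{A}\Phi)$ with $\tilde{F}(\omega)(X)=[F(e_{i},X),\omega(e_{i})]$, rather than its negative, which would cancel. I would do these checks in a normal orthonormal frame at a point, writing $(d_{A}^{*}\eta)(X)=-(\nabla_{e_{i}}\eta)(e_{i},X)$. The sign in the Leibniz rule for $d_{A}[d_{A}\Phi,\Phi]$ needs the same frame check.
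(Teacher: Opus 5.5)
Your proposal is correct and matches the paper's proof: apply the Bourguignon--Lawson formulas to $d_A\Phi$ and $F$, use $d_A^*d_A\Phi=0$ and $d_AF=0$, and expand $d_A^*[F,\Phi]=[d_A^*F,\Phi]-\tilde{F}(d_A\Phi)$ and $d_A[d_A\Phi,\Phi]=[d_Ad_A\Phi,\Phi]-[d_A\Phi,d_A\Phi]$. The sign you flagged does work out: with $(d_A^*\eta)(X)=-(\nabla_{e_i}\eta)(e_i,X)$, the Leibniz rule produces the contraction term $-[F(e_i,X),\nabla_{e_i}\Phi]=-\tilde{F}(d_A\Phi)(X)$, which gives the factor $2$ in the first formula.
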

\begin{proof}
For simplicity write $d=d_{A}$ and $d^{*}=d_{A}^{*}$.

Using $d^{*}d\Phi=0$, $dd\Phi=[F,\Phi]$ and $d^{*}F=[d\Phi,\Phi]$
we have
\[
(dd^{*}+d^{*}d)d\Phi=d^{*}[F,\Phi]=[d^{*}F,\Phi]-\tilde{F}(d\Phi)=[[d\Phi,\Phi],\Phi]-\tilde{F}(d\Phi).
\]

Using $d^{*}F=[d\Phi,\Phi]$, $dF=0$ and $dd\Phi=[F,\Phi]$ we have
\[
(dd^{*}+d^{*}d)F=d[d\Phi,\Phi]=[dd\Phi,\Phi]-[d\Phi,d\Phi]=[[F,\Phi],\Phi]-[d\Phi,d\Phi].
\]

The conclusion follows from Lemma \ref{lem:bourguignonlawson}.
\end{proof}
We recall refined Kato inequalities of Cibotaru and Vieira \cite{CV2026}
(Corollary 5.1) for part (a) and Smith and Uhlenbeck \cite{SU2019}
(Theorem 5) for part (b).
\begin{lem}
\label{lem:kato}Consider a Riemannian manifold of dimension $n\geq4$.

(a) For a Yang-Mills connection $A$ on the manifold we have
\[
|\nabla_{A}F|^{2}\geq(1+\frac{1}{n-2})|\nabla|F||^{2}.
\]

(b) For a Yang-Mills-Higgs pair $(A,\Phi)$ on the manifold we have
\[
|\nabla_{A}d_{A}\Phi|^{2}\geq(1+\frac{1}{n-1})|\nabla|d_{A}\Phi||^{2}-|[F,\Phi]|^{2},
\]
\[
|\nabla_{A}F|^{2}\geq(1+\frac{1}{n-1})|\nabla|F||^{2}-|[d_{A}\Phi,\Phi]|^{2}.
\]
\end{lem}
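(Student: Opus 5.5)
The plan is to prove both parts of Lemma~\ref{lem:kato} by the same pointwise linear-algebra argument, which is the standard mechanism behind refined Kato inequalities. Fix a point where $\omega\neq0$, where $\omega$ is $F$ or $d_{A}\Phi$, a $k$-form ($k=2$ or $k=1$) with values in $so(E)$. Write $\nabla|\omega|=\langle\nabla_{A}\omega,\omega\rangle/|\omega|$, so $|\nabla|\omega||$ is the norm of the contraction of $\nabla_{A}\omega\in T^{*}\otimes\Lambda^{k}\otimes so(E)$ against the unit vector $\omega/|\omega|$. Choosing an orthonormal basis of $so(E)$ at the point reduces everything to vector-valued $k$-forms. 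The Ad-invariance of the metric is used only to make sure the bracket terms are well defined.

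Decompose $T^{*}\otimes\Lambda^{k}=\Lambda^{k+1}\oplus\Lambda^{k-1}\oplus\mathcal{C}$ orthogonally, with $\mathcal{C}$ the Cartan (twistor-free) component. Accordingly write $\nabla_{A}\omega=T+S$, where $T\in\mathcal{C}\otimes so(E)$ and $S$ is the sum of the projections onto the $\Lambda^{k+1}$ and $\Lambda^{k-1}$ components. These projections are determined by $d_{A}\omega$ and $d_{A}^{*}\omega$. In our normalization their squared norms are $\frac{1}{k+1}|d_{A}\omega|^{2}$ and $\frac{1}{n-k+1}|d_{A}^{*}\omega|^{2}$.

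The key algebraic input is the sharp estimate for the Cartan part. For $\eta\in\mathcal{C}\otimes V$ and a unit $\theta\in\Lambda^{k}\otimes V$, the norm of the contraction of $\eta$ with $\theta$ is at most $\sqrt{c_{k}}\,|\eta|$, where $c_{k}=\frac{n-k}{n-k+1}$. This is the classical computation of the Kato constant for the operator $d\oplus d^{*}$ (Branson, Calderbank--Gauduchon--Herzlich). It gives $1+\frac{1}{n-1}$ for $k=1$ and $1+\frac{1}{n-2}$ for $k=2$. I would verify it by maximizing over $\xi\otimes\theta$ with $\xi$ a unit covector, projecting onto $\mathcal{C}$, and using $O(n)$-equivariance, so that the case of $V$-valued forms reduces to the scalar one via Cauchy--Schwarz.

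For part (a), $F$ satisfies $d_{A}F=0$ (Bianchi) and $d_{A}^{*}F=0$, so $S=0$. The estimate then gives exactly $|\nabla|F||^{2}\le\frac{n-2}{n-1}|\nabla_{A}F|^{2}$, which is the claimed inequality. For part (b), take $\omega=d_{A}\Phi$, for which $d_{A}^{*}\omega=0$ and $d_{A}\omega=[F,\Phi]$, or take $\omega=F$, for which $d_{A}\omega=0$ and $d_{A}^{*}\omega=[d_{A}\Phi,\Phi]$. In both cases $|S|^{2}\le|\text{source}|^{2}$ with a constant at most $\frac12$. Then
\[
|\nabla|\omega||\le\sqrt{c_{k}}\,|T|+|S|,
\]
and I would apply $(a+b)^{2}\le(1+t)a^{2}+(1+t^{-1})b^{2}$ with $t$ chosen to upgrade the constant. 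For $d_{A}\Phi$ one has $c_{1}=\frac{n-1}{n}$, and I would aim for
\[
\tfrac{n}{n-1}|\nabla|\omega||^{2}\le|T|^{2}+|S|^{2}+|\text{source}|^{2}=|\nabla_{A}\omega|^{2}+|\text{source}|^{2}.
\]
For $F$ the Cartan constant $\frac{n-2}{n-1}$ is better than what is required. So I would only claim the weaker factor $1+\frac{1}{n-1}$, leaving slack that absorbs the cross term from $S$.

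The main obstacle is the bookkeeping of exact constants in the perturbed case, so that the error is precisely $|[F,\Phi]|^{2}$ (respectively $|[d_{A}\Phi,\Phi]|^{2}$) with coefficient $1$. The cleanest route is probably to skip the triangle inequality and instead bound directly the contraction of $\nabla_{A}\omega$ with a unit $\theta$ by the quadratic form $c_{k}|T|^{2}+\mu|S|^{2}$, using orthogonality of $T$ and $S$ and an explicit computation of the contraction on the $\Lambda^{k\pm1}$ components. One then checks that the resulting coefficient on $|S|^{2}$ times the projection factor is at most $1$. If the constants do not close, I would fall back on citing Corollary 5.1 of \cite{CV2026} and Theorem 5 of \cite{SU2019}, which is what the lemma records.
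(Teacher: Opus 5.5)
The paper does not prove this lemma; it quotes Corollary 5.1 of \cite{CV2026} and Theorem 5 of \cite{SU2019}. Your decomposition argument would be a self-contained replacement, and part (a) is complete. For unit $\xi\in T^{*}$ and unit $\theta\in\Lambda^{k}\otimes V$, with $x=|\xi\wedge\theta|^{2}\in[0,1]$, one has
\[
|P_{\mathcal{C}}(\xi\otimes\theta)|^{2}=\frac{n-k}{n-k+1}-\left(\frac{1}{k+1}-\frac{1}{n-k+1}\right)x .
\]
The projections onto the $\Lambda^{k\pm1}$ parts satisfy $|P_{+}(\xi\otimes\theta)|^{2}=\frac{x}{k+1}$ and $|P_{-}(\xi\otimes\theta)|^{2}=\frac{1-x}{n-k+1}$. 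These hold verbatim for $V$-valued $\theta$. You should state that $c_{2}=\frac{n-2}{n-1}$ relies on $n\geq4$; for $n=3$ the supremum would be $\frac{2}{3}$.

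In part (b) there is a genuine gap. The route you lay out, $|\nabla|\omega||\leq\sqrt{c_{k}}|T|+|S|$ followed by Young, cannot give the stated inequalities, and the obstruction is structural rather than bookkeeping. After multiplying by $\frac{n}{n-1}$, the coefficient of $|T|^{2}$ must be at most $1$, since $|T|^{2}$ can only be absorbed by $|\nabla_{A}\omega|^{2}$.
\begin{itemize}
\item For $\omega=d_{A}\Phi$, the constant $c_{1}=\frac{n-1}{n}$ already equals the target. The term $(1+t)|T|^{2}$ therefore overshoots for every $t>0$.
\item For $\omega=F$, the slack you count on is used up exactly. The constraint forces $t\leq\frac{1}{n(n-2)}$, hence $1+t^{-1}\geq(n-1)^{2}$. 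With your contraction bound $|S|$ instead of the sharp $\frac{1}{\sqrt{n-1}}|S|$, the error becomes $(n-\frac{1}{n-1})|[d_{A}\Phi,\Phi]|^{2}$, not $|[d_{A}\Phi,\Phi]|^{2}$.
\end{itemize}
The missing idea is that the $\mathcal{C}$-part and the $\Lambda^{k\pm1}$-parts of $\xi\otimes\theta$ trade off through the single parameter $x$. Bounding each contraction separately throws this away.

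Your ``direct route'' is exactly the fix, and it closes.

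For $k=1$ one has $|P_{\mathcal{C}}|^{2}=\frac{n-1}{n}-\frac{n-2}{2n}x$ and $|P_{+}|^{2}=\frac{x}{2}$. Bound $\langle\nabla_{A}\omega,\xi\otimes\theta\rangle\leq|T|\,|P_{\mathcal{C}}|+|S|\,|P_{+}|$ and apply Cauchy--Schwarz with weights $1$ and $\frac{n-2}{n}$. The geometric factor then equals $\frac{n-1}{n}$ independently of $x$, which gives
\[
\frac{n}{n-1}|\nabla|d_{A}\Phi||^{2}\leq|T|^{2}+\frac{n}{n-2}|S|^{2}=|\nabla_{A}d_{A}\Phi|^{2}+\frac{1}{n-2}|[F,\Phi]|^{2}.
\]
This is stronger than required.

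For $k=2$, the hypothesis $n\geq4$ gives $|P_{\mathcal{C}}|^{2}\leq\frac{n-2}{n-1}$, and also $|P_{-}|^{2}\leq\frac{1}{n-1}$. Hence $|\nabla|F||\leq\sqrt{\frac{n-2}{n-1}}\,|T|+\frac{1}{\sqrt{n-1}}\,|S|$. Young with $t=\frac{1}{n(n-2)}$ then gives
\[
\frac{n}{n-1}|\nabla|F||^{2}\leq|T|^{2}+n|S|^{2}=|\nabla_{A}F|^{2}+|d_{A}^{*}F|^{2}=|\nabla_{A}F|^{2}+|[d_{A}\Phi,\Phi]|^{2}.
\]
Here the coefficient $1$ cannot be lowered pointwise. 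Equality holds for $\theta=e^{1}\wedge e^{2}\otimes v$, $\xi=e^{1}$, with $T$ and $S$ aligned with $P_{\mathcal{C}}(\xi\otimes\theta)$ and $P_{-}(\xi\otimes\theta)$ and $|T|=n\sqrt{n-2}\,|S|$. So state your form-norm normalization explicitly, since nothing is left to spare.

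With these two computations, your argument proves (b) without relying on \cite{SU2019}.
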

For a function $a$ on the manifold we write $K\geq a$ when $\langle K(\omega),\omega\rangle\geq a|\omega|^{2}$
for all two-forms $\omega$ on the manifold.

We fix uniform constants $B_{1}>0$ and $B_{2}>0$ such that
\[
|\langle\tilde{F}(F),F\rangle|\leq B_{1}|F|^{3},\,\,\,\,\,|[d_{A}\Phi,d_{A}\Phi]|\leq B_{2}|d_{A}\Phi|^{2}.
\]

Using Lemma \ref{lem:fadel} and Lemma \ref{lem:kato} we obtain a
refined Bochner inequality for Yang-Mills connections and Yang-Mills-Higgs
pairs.
\begin{thm}
\label{thm:bochnerinequality}Consider a Riemannian manifold of dimension
$n\geq4$ and suppose $K\geq a_{1}$ and $Ric\geq a_{2}$ for some
functions $a_{1}$ and $a_{2}$ on the manifold.

(a) For a Yang-Mills connection $A$ on the manifold the function
$u=|F|^{\frac{n-3}{n-2}}$ satisfies
\[
\Delta u\geq-B|F|u+au,
\]
where $B=\frac{n-3}{n-2}B_{1}$ and $a=\frac{n-3}{n-2}a_{1}$.

(b) For a Yang-Mills-Higgs pair $(A,\Phi)$ on the manifold the function
$u=(|F|^{2}+|d_{A}\Phi|^{2})^{\frac{n-2}{2(n-1)}}$ satisfies
\[
\Delta u\geq-B|F|u+au,
\]
where $B=\frac{n-2}{n-1}\max\{B_{1},3B_{2}\}$ and $a=\frac{n-2}{n-1}\min\{a_{1},a_{2}\}$.
\end{thm}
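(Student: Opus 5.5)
The plan is the standard Bochner–Kato argument: compute $\Delta|\omega|^{2}$ from Lemma \ref{lem:fadel}, use Lemma \ref{lem:kato} to absorb the gradient term, and pass to the power $u$ of $|\omega|$ whose exponent makes the Kato constant exactly cancel the $|\nabla|\omega||^{2}$ term.

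For (a), set $f=|F|$ and take $\Phi=0$ in Lemma \ref{lem:fadel}, so $\Delta_{A}F=\tilde{F}(F)+K(F)$. Then
\[
\tfrac12\Delta f^{2}=|\nabla_{A}F|^{2}+\langle\tilde{F}(F),F\rangle+\langle K(F),F\rangle\geq|\nabla_{A}F|^{2}-B_{1}f^{3}+a_{1}f^{2}.
\]
Since $\tfrac12\Delta f^{2}=f\Delta f+|\nabla f|^{2}$, Lemma \ref{lem:kato}(a) gives, where $f>0$,
\[
f\Delta f\geq\tfrac{1}{n-2}|\nabla f|^{2}-B_{1}f^{3}+a_{1}f^{2}.
\]
With $s=\frac{n-3}{n-2}$ we have
\[
\Delta f^{s}=sf^{s-2}\bigl(f\Delta f+(s-1)|\nabla f|^{2}\bigr),
\]
and $s-1=-\frac{1}{n-2}$, so the gradient terms cancel. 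This leaves
\[
\Delta u\geq s(-B_{1}f+a_{1})u,
\]
which is the claimed inequality.

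For (b), set $f^{2}=|F|^{2}+|d_{A}\Phi|^{2}$. Lemma \ref{lem:fadel} gives $\tfrac12\Delta|d_{A}\Phi|^{2}$ and $\tfrac12\Delta|F|^{2}$. Ad-invariance gives $\langle-[[d\Phi,\Phi],\Phi],d\Phi\rangle=|[d\Phi,\Phi]|^{2}$ and $\langle-[[F,\Phi],\Phi],F\rangle=|[F,\Phi]|^{2}$. These are exactly the positive terms cancelling the negative terms $-|[F,\Phi]|^{2}$ and $-|[d\Phi,\Phi]|^{2}$ in Lemma \ref{lem:kato}(b), after adding the two identities. The remaining cross terms are bounded as follows:
\[
\bigl|2\langle\tilde{F}(d\Phi),d\Phi\rangle\bigr|\lesssim|F||d\Phi|^{2},\qquad \bigl|\langle[d\Phi,d\Phi],F\rangle\bigr|\leq B_{2}|d\Phi|^{2}|F|,\qquad |\langle\tilde F(F),F\rangle|\leq B_{1}|F|^{3}.
\]
Each is at most $\max\{B_{1},3B_{2}\}|F|f^{2}$. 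Here one must check that the $\tilde{F}(d\Phi)$ constant is controlled by $2B_{2}$, or else absorb it into the choice of constant; this bookkeeping is the main obstacle, but it only affects constants. The curvature terms give at least $\min\{a_{1},a_{2}\}f^{2}$. This yields
\[
\tfrac12\Delta f^{2}\geq(1+\tfrac{1}{n-1})\bigl(|\nabla|F||^{2}+|\nabla|d\Phi||^{2}\bigr)-\max\{B_{1},3B_{2}\}|F|f^{2}+\min\{a_{1},a_{2}\}f^{2}.
\]

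Next use $|\nabla f|^{2}\leq|\nabla|F||^{2}+|\nabla|d\Phi||^{2}$, which is Cauchy–Schwarz on $f\nabla f=|F|\nabla|F|+|d\Phi|\nabla|d\Phi|$. This gives
\[
f\Delta f\geq\tfrac{1}{n-1}|\nabla f|^{2}-B'|F|f^{2}+a'f^{2},
\]
with $B'=\max\{B_{1},3B_{2}\}$ and $a'=\min\{a_{1},a_{2}\}$. Taking $s=\frac{n-2}{n-1}$ cancels the gradient term as in (a), since $s-1=-\frac{1}{n-1}$. This gives $\Delta u\geq-B|F|u+au$ with the stated $B$ and $a$.

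At zeros of $f$ the computation is not valid pointwise. There I would work with $(f^{2}+\eta)^{s/2}$ and let $\eta\to0$, interpreting the inequality in the distributional sense, or simply restrict to $\{f>0\}$ as the paper implicitly does.
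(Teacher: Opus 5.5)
Your route is the same as the paper's: add the two Bochner identities from Lemma \ref{lem:fadel}, let $|[F,\Phi]|^{2}$ and $|[d_A\Phi,\Phi]|^{2}$ cancel the error terms of Lemma \ref{lem:kato}(b), use $|\nabla f|^{2}\leq|\nabla|F||^{2}+|\nabla|d_A\Phi||^{2}$, and pass to $f^{1-\epsilon}$. Part (a) is complete as written.

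In part (b), however, you leave unresolved exactly the step that produces the constant in the statement. The theorem prescribes $B=\frac{n-2}{n-1}\max\{B_{1},3B_{2}\}$, and $B_{2}$ is only assumed to control $|[d_A\Phi,d_A\Phi]|$. So your fallback of absorbing the $\tilde{F}(d_A\Phi)$ term into a larger constant does not prove the theorem as stated. A generic bound $|\langle\tilde{F}(d_A\Phi),d_A\Phi\rangle|\leq c_{n}|F||d_A\Phi|^{2}$ is not expressed through $B_{2}$. The missing ingredient, which the paper uses explicitly, is the identity $\langle[d_A\Phi,d_A\Phi],F\rangle=\langle\tilde{F}(d_A\Phi),d_A\Phi\rangle$. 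It follows in one line from Ad-invariance. With $\omega_{i}=(d_A\Phi)(e_{i})$ and $F_{ij}=F(e_{i},e_{j})$,
\[
\langle\tilde{F}(d_A\Phi),d_A\Phi\rangle=\sum_{i,j}\langle[F_{ij},\omega_{i}],\omega_{j}\rangle=\sum_{i,j}\langle F_{ij},[\omega_{i},\omega_{j}]\rangle=\langle[d_A\Phi,d_A\Phi],F\rangle,
\]
where the last equality uses $[d_A\Phi,d_A\Phi](e_{i},e_{j})=2[\omega_{i},\omega_{j}]$ and the usual normalization $\langle\alpha,\beta\rangle=\frac{1}{2}\sum_{i,j}\langle\alpha(e_{i},e_{j}),\beta(e_{i},e_{j})\rangle$ for two-forms. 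This confirms the $2B_{2}$ check you flagged.

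Consequently the two cross terms combine into $3\langle[d_A\Phi,d_A\Phi],F\rangle\geq-3B_{2}|F||d_A\Phi|^{2}$. The correct bookkeeping is then $B_{1}|F|^{3}+3B_{2}|F||d_A\Phi|^{2}\leq\max\{B_{1},3B_{2}\}|F|f^{2}$. Your sentence ``each is at most $\max\{B_{1},3B_{2}\}|F|f^{2}$'' should be replaced by this grouped estimate of the sum. Bounding three terms separately by that quantity only gives $3\max\{B_{1},3B_{2}\}$. With the identity and this grouping inserted, your proof of (b) goes through and coincides with the paper's.
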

\begin{proof}
For simplicity write $\nabla=\nabla_{A}$, $d=d_{A}$ and $d^{*}=d_{A}^{*}$.

We prove the result for Yang-Mills-Higgs pairs. By Lemma \ref{lem:fadel}
we have
\[
\frac{1}{2}\Delta|F|^{2}=|\nabla F|^{2}+|[F,\Phi]|^{2}+\langle[d\Phi,d\Phi],F\rangle+\langle\tilde{F}(F),F\rangle+\langle K(F),F\rangle,
\]
\[
\frac{1}{2}\Delta|d\Phi|^{2}=|\nabla d\Phi|^{2}+|[d\Phi,\Phi]|^{2}+2\langle\tilde{F}(d\Phi),d\Phi\rangle+\langle Ric(d\Phi),d\Phi\rangle.
\]
Take $v=(|F|^{2}+|d_{A}\Phi|^{2})^{\frac{1}{2}}$ and $\epsilon=\frac{1}{n-1}$.
Adding the equations and using Lemma \ref{lem:kato} and the fact
that $\langle[d\Phi,d\Phi],F\rangle=\langle\tilde{F}(d\Phi),d\Phi\rangle$
we obtain
\[
\frac{1}{2}\Delta v^{2}\geq(1+\epsilon)(|\nabla|F||^{2}+|\nabla|d\Phi||^{2})+3\langle[d\Phi,d\Phi],F\rangle
\]
\[
+\langle\tilde{F}(F),F\rangle+\langle K(F),F\rangle+\langle Ric(d\Phi),d\Phi\rangle.
\]
We also have
\[
|\nabla v|^{2}\leq|\nabla|F||^{2}+|\nabla|d\Phi||^{2}.
\]
So we obtain
\[
v\Delta v\geq\epsilon|\nabla v|^{2}-\max\{B_{1},3B_{2}\}|F|v^{2}+\min\{a_{1},a_{2}\}v^{2}.
\]
The desired inequality follows from a direct calculation taking $u=v^{1-\epsilon}$.

The proof of the result for Yang-Mills connections is similar with
$v=|F|$ and $\epsilon=\frac{1}{n-2}$.
\end{proof}

\section{Proof of the corollaries}

\subsection{Proof of Corollary \ref{cor:ac}}

Using results of Benatti, Fogagnolo and Mazzieri \cite{BFM2024} we
obtain the existence of a harmonic function with suitable behavior.
\begin{lem}
\label{lem:harmonicac}Consider a complete $C^{2}$ asymptotically
conical Riemannian manifold of dimension $n\geq3$ with finitely many
ends and quadratically asymptotically nonnegative Ricci curvature.
Then there is a positive harmonic function $h$ defined outside a
compact set such that $\rho=h^{-\frac{1}{n-2}}$ is proper and $|\nabla\rho|\geq c_{0}>0$.
Also $\rho\sim r$ where $r$ is the radial coordinate of the asymptotically
conical structure (see Definition 2.13 in \cite{BFM2024}).
\end{lem}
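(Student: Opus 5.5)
The plan is to take $h$ to be (a constant multiple of) the minimal positive Green's function $G(o,\cdot)$ of the manifold, restricted to the complement of a large compact set. The required properties are then read off from its asymptotics at infinity, which \cite{BFM2024} establishes on $C^{2}$ asymptotically conical manifolds with quadratically asymptotically nonnegative Ricci curvature.

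\textbf{Step 1: existence of $h$.} Since $n\geq3$ and the manifold is asymptotically conical, volume growth is Euclidean ($|B(o,r)|\sim r^{n}$). The end is therefore nonparabolic, and the minimal positive Green's function $G$ exists and is harmonic away from the pole $o$. Set $h=G(o,\cdot)$ outside a compact set containing $o$.

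\textbf{Step 2: asymptotics and properness of $\rho$.} Under quadratically asymptotically nonnegative Ricci curvature, Li--Yau type two-sided bounds give $G(o,x)\sim d(o,x)^{2-n}$. On each end, $d(o,x)\sim r$. Hence $\rho=h^{-\frac{1}{n-2}}\sim r$, and $\rho$ is proper because $r$ is proper and there are finitely many ends. In fact \cite{BFM2024} proves the sharper statement that, after normalization, $\rho/r\to c_{E}$ on each end $E$.

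\textbf{Step 3: gradient lower bound, the main obstacle.} Here I would invoke the $C^{1}$-asymptotics from \cite{BFM2024}. On each end, the blow-down of $h$ converges in $C^{1}_{loc}$ to the Green's function of the cone $C_{E}$, namely a multiple of $r^{2-n}$. This uses the $C^{2}$ control $|\nabla^{j}_{g_{C}}(g-g_{C})|=O(r^{-\tau-j})$ together with elliptic estimates on rescaled annuli.

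Concretely, set $h_{\lambda}(x)=\lambda^{n-2}h(\lambda x)$ on the annulus $\{1\le r\le2\}$ with the rescaled metric $\lambda^{-2}g$. This metric converges in $C^{1,\alpha}$ to the cone metric, and the functions $h_{\lambda}$ are uniformly bounded above and below by Step 2. Schauder estimates then give $C^{1,\alpha}$ bounds. Any limit is a positive harmonic function on the cone with two-sided bounds $\sim r^{2-n}$, and by a Liouville/uniqueness result on the cone (via \cite{BFM2024}) it equals $c\,r^{2-n}$.

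Therefore $|\nabla\rho|\to c_{E}^{-1}\cdot|\nabla r|_{g_{C}}=\text{const}>0$ on each end. Since there are finitely many ends, after enlarging the compact set we get $|\nabla\rho|\geq c_{0}>0$.

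The delicate point is the identification of the limit, i.e.\ the uniqueness of positive harmonic functions on the cone with $r^{2-n}$ growth. This is exactly what I would cite from \cite{BFM2024}, rather than reprove.
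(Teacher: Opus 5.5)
Your proposal is correct. At the top level it matches the paper: take a decaying positive harmonic function and read its behaviour at infinity from \cite{BFM2024}. The difference is how you get the gradient bound. The paper's proof is a pure citation. Theorems 1.1 and 3.1 of \cite{BFM2024} give, on each end, the first-order expansions $h=a_{E}r^{-(n-2)}+o(r^{-(n-2)})$ and $\nabla h=a_{E}\nabla r^{-(n-2)}+o(r^{-(n-1)})$, from which $\rho\sim r$ and $|\nabla\rho|\sim1$ follow at once.

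You use only two-sided bounds $h\sim r^{2-n}$ and derive $|\nabla\rho|\geq c_{0}$ by blow-down. The $C^{2}$-AC hypothesis makes the rescaled metrics converge to $g_{C}$ in $C^{2}$ on compact subsets of the cone minus its vertex. Since $|\nabla\rho|$ is scale invariant, a contradiction argument along $r(x_{k})\to\infty$ reduces the claim to the limit. This is more self-contained and shows that only $C^{0}$ control is really needed, at the cost of the compactness argument.

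The step you flag as delicate is elementary, so no citation is needed. Separating variables, a harmonic function on $C(\Sigma)\setminus\{0\}$ is $\sum_{j}(a_{j}r^{\gamma_{j}^{+}}+b_{j}r^{\gamma_{j}^{-}})\phi_{j}$, where $\gamma_{j}^{\pm}$ are the roots of $\gamma^{2}+(n-2)\gamma=\mu_{j}$. The bound $\leq Cr^{2-n}$ forces every $a_{j}=0$ as $r\to\infty$. As $r\to0$ it forces $b_{j}=0$ for $j\geq1$, because $\gamma_{j}^{-}<2-n$.

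You also do not need the limit to be independent of the subsequence. If a limit is $br^{2-n}$, then $|\nabla\rho|\to b^{-\frac{1}{n-2}}\geq C^{-\frac{1}{n-2}}$, which is uniform. With your normalization $\rho/r\to c_{E}$, the limit would be $c_{E}$, not $c_{E}^{-1}$; this slip is harmless.

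Be careful with the Li--Yau citation in Step 2. The classical two-sided Green's function bounds assume $Ric\geq0$, which on an AC manifold forces a single end. With several ends and only quadratically asymptotically nonnegative Ricci curvature, the robust route is to compare $G(o,\cdot)$ outside a ball $B\ni o$ with the capacitary potential $u_{B}$, to which \cite{BFM2024} applies. Passing to the limit from Dirichlet problems on an exhaustion gives $\min_{\partial B}G\cdot u_{B}\leq G\leq\max_{\partial B}G\cdot u_{B}$. Alternatively, simply take $h=u_{B}$ from the start. Nonparabolicity itself comes from the Sobolev inequality on AC ends, not from Euclidean volume growth alone.
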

\begin{proof}
In this proof $A\sim B$ means $C^{-1}B\leq A\leq CB$ for some constant
$C>0$. By Theorem 1.1 and Theorem 3.1 in \cite{BFM2024} there is
a positive harmonic function $h$ defined outside a compact set such
that on each asymptotically conical end $E$ we have
\[
h=a_{E}r^{-(n-2)}+o(r^{-(n-2)}),\,\,\,\,\,\nabla h=a_{E}\nabla r^{-(n-2)}+o(r^{-(n-1)}),
\]
for some constant $a_{E}>0$. So on each end we have $\rho\sim r$
and $|\nabla\rho|\sim1$. Since there are finitely many ends, these
estimates hold simultaneously on all ends.
\end{proof}
On each end $E$ we have $Ric=Ric_{C_{E}}+O(r^{-2-\tau})$ and $Rm=Rm_{C_{E}}+O(r^{-2-\tau})$,
and so $K=K_{C_{E}}+O(r^{-2-\tau})$. Since there are finitely many
ends and $Ric_{C_{E}}\geq0$ and $K_{C_{E}}\geq0$ we see that outside
a compact set $Ric\geq-C_{0}r^{-(2+\tau)}$ and $K\geq-C_{0}r^{-(2+\tau)}$
for some constant $C_{0}>0$.

Take the function $h$ from Lemma \ref{lem:harmonicac}.

We prove the result for Yang-Mills-Higgs pairs. Take $u=q(A,\Phi)^{\frac{n-2}{n-1}}$,
$p=\frac{n-1}{n-2}$ and $\nu=n-2$. Note that $p\nu>2$ for $n\geq4$.
Using Theorem \ref{thm:bochnerinequality}(b) and $\rho\sim r$, after
changing the constants if necessary, we have
\[
\Delta u\geq-C_{1}u^{p+1}-C_{2}\rho^{-2-\tau}u\,\,\,\,\,and\,\,\,\,\,u\leq\epsilon^{\frac{1}{p}}\rho^{-\frac{2}{p}}\,\,\,\,\,in\,\,\,\,\,\{\rho\geq R\}.
\]
By Theorem \ref{thm:abstractdecay} we have
\[
u\leq C\epsilon^{\frac{1}{p}}R^{\nu-\frac{2}{p}}\rho^{-\nu}\,\,\,\,\,in\,\,\,\,\,\{\rho\geq R\}.
\]
The conclusion follows by raising both sides to the power $p$ and
using $\rho\sim r$.

The proof of the result for Yang-Mills connections is similar by taking
$u=|F|^{\frac{n-3}{n-2}}$, $p=\frac{n-2}{n-3}$ and $\nu=n-2$ and
using Theorem \ref{thm:bochnerinequality}(a).

\subsection{Proof of Corollary \ref{cor:product}}

As in the proof of Corollary \ref{cor:ac} we see that outside a compact
set $Ric_{X}\geq-C_{0}r^{-(2+\tau)}$ and $K_{X}\geq-C_{0}r^{-(2+\tau)}$
for some constant $C_{0}>0$.

We see that $Ric_{M}\geq-C_{0}r^{-(2+\tau)}$ outside a compact set.

With respect to the decomposition
\[
\Lambda^{2}M=\Lambda^{2}X\oplus(\Lambda^{1}X\wedge\Lambda^{1}Y)\oplus\Lambda^{2}Y,
\]
the restrictions of $K_{M}$ to these parts are $K_{X}$, $Ric_{X}+Ric_{Y}$
and $K_{Y}$ respectively, so we see that $K_{M}\geq-C_{0}r^{-(2+\tau)}$
outside a compact set by increasing $C_{0}$ if necessary.

Take the function $h$ from Lemma \ref{lem:harmonicac} for the AC
factor $X$. Now consider $h$ and $r$ as functions on the product
manifold. We see that $h$ is a positive harmonic function defined
outside a compact set of $M$ such that $\rho=h^{-\frac{1}{k-2}}$
is proper and $|\nabla\rho|\geq c_{0}>0$. Also $\rho\sim r$.

For Yang-Mills-Higgs pairs take $u=q(A,\Phi)^{\frac{n-2}{n-1}}$,
$p=\frac{n-1}{n-2}$ and $\nu=k-2$. Note that $p\nu>2$ for $k\geq4$
and $p\nu=\sigma'$.

For Yang-Mills connections take $u=|F|^{\frac{n-3}{n-2}}$, $p=\frac{n-2}{n-3}$
and $\nu=k-2$. Note that $p\nu>2$ for $k\geq4$ and $p\nu=\sigma$.

The rest of the proof is similar to the proof of Corollary \ref{cor:ac}.

\lyxaddress{Departamento de Matemática, Universidade Federal do Espírito Santo,
Vitória, ES, Brazil. Email: matheus.vieira@ufes.br}
\end{document}